\documentclass[12pt]{amsart}
\textwidth=15.5cm
\textheight=23.0cm
\voffset=-2.0cm
\hoffset=-1.4cm
\usepackage{amssymb}
\usepackage{amsmath}
\usepackage{mathtools}
\usepackage{amsthm}
\newtheorem{theorem}{Theorem}[section]
\newtheorem{lemma}[theorem]{Lemma}

\newtheorem{proposition}[theorem]{Proposition}
\theoremstyle{definition}

\newtheorem{remark}{Remark}

\numberwithin{equation}{section}
\usepackage{color}
\usepackage{hyperref}
\hypersetup{
    colorlinks=true, 
    linkcolor=blue,  
}
\mathchardef\hyphen="2D

\makeatletter
\newcommand{\proofpart}[2]{%
    \par
    \addvspace{\medskipamount}%
    \noindent\emph{#2:}\par\nobreak
    \addvspace{\smallskipamount}%
    \@afterheading
}
\makeatother

\begin{document}
\allowdisplaybreaks
\title[Convergence rate of empirical measures in the SRW distance]{Convergence rate of empirical measures in the subspace robust wasserstein distance}
\author{Dakshesh Vasan}
\address{Department of Statistics and Probability, Michigan State University, East Lansing, MI 48825}
\email{dakshesh@msu.edu}
\begin{abstract}
We obtain an estimate for the expected subspace robust Wasserstein distance between any probability measure on the unit ball of a separable Hilbert space, and its empirical distribution from $n$ i.i.d. samples.
\end{abstract}
\keywords{subspace robust Wasserstein distance; empirical measure; convergence rate}
\subjclass[2020]{60B10, 60B11, 62R20}
\maketitle
\newcommand{\summ}[2]{\underset{i=#1}{\overset{#2}{\sum}}}

\section{Introduction}
\subsection{Background}\label{bg}
In recent years, the Wasserstein distance has gained significant traction due to applications of optimal transport in machine learning/artificial intelligence and big data analysis. It is of particular interest in these areas of research to estimate the distance between the empirical and true measures so that one can find the number of samples needed. More precisely, let $\mu$ be a probability measure on a separable Hilbert space, $\mathcal{H}$. Suppose that $X_1, X_2,\ldots, X_n$ are independent random vectors in $\mathcal{H}$, distributed identically according to $\mu$. Let $\delta_x$ denote the probability measure on $\mathcal{H}$ with an atom of mass 1 at $x \in \mathcal{H}$. We are interested in the rate of convergence of the empirical measure $\mu_n = \frac{1}{n}\summ{1}{n}\delta_{X_i}$ to $\mu$ as $n \rightarrow \infty$. 

For the classical Wasserstein distance, in general, the number of samples required increases exponentially as the dimension of the underlying space grows \cite{Fournier}. To address this curse of dimensionality, variants of Wasserstein distance are being studied in recent years. One variant of particular interest is the max-sliced Wasserstein distance, or its extension, the projection robust Wasserstein distance. With this variant, the empirical measures converge at a polynomial rate (see \cite{March, Lin, Nietert, NR, Olea, WGX}). However, computing the projection robust Wasserstein distance involves solving a non-convex max-min problem, which is computationally intractable. To overcome this issue, the notion of subspace robust Wasserstein distance is introduced in \cite{Paty} as a min-max reformulation of the projection robust Wasserstein distance. The former is more computationally tractable (as it admits a tight convex relaxation), while still exhibiting robustness to random perturbation of the data \cite[Section 6 and Section 7]{Paty} like the latter. Thus, the subspace robust Wasserstein distance has garnered significant interest, but the convergence rate of the empirical measures under this distance has not been studied. 

\subsection{Contribution}
In this paper, we study the convergence rate of empirical measures in the subspace robust Wasserstein distance. We show that for any probability measure $\mu$ on the closed unit ball of a separable Hilbert space, the expected $k$-dimensional subspace robust Wasserstein distance between $\mu$ and its empirical measure from $n$ samples is at most $\frac{C\sqrt{k \log \log n}}{\sqrt{\log n}}$, where $C > 0$ is a universal constant. Further, we show that in the worst case, the rate of convergence cannot be faster than $O(\frac{1}{\sqrt{\log n}})$. Hence, the upper bound is sharp up to a $\sqrt{\log \log n}$ factor.

\subsection{Main result} 
The classical \textbf{2-Wasserstein distance} between two probability measures $\mu, \nu$ on a separable Hilbert space $\mathcal{H}$ is defined as follows:
\begin{equation}
    W_2(\mu, \nu) \coloneq \inf_{\Pi(\mu, \nu)} \left( \int \left\lVert x-y \right\rVert^2 d\pi(x,y) \right)^{1/2} 
\end{equation}
where $\Pi(\mu, \nu)$ is the collection of all couplings of $\mu, \nu$ (see the precise definition in Section \ref{coupling} below).

If $\mu, \nu$ are probability measures on $\mathcal{H}$, then for $1 \leq k \leq \mathrm{dim}(\mathcal{H})$, the \textbf{$k$-dimensional subspace robust 2-Wasserstein distance} between $\mu$ and $\nu$ \cite[Definition 2]{Paty} is:  
    \begin{equation*}
    S_k(\mu,\nu) := \inf_{\pi \in \Pi(\mu, \nu)} \sup_{E\in\mathcal{G}_k} \left(\int \left\lVert P_E(x-y) \right\rVert^2 d\pi(x,y) \right)^{1/2},
    \end{equation*} 
where $\mathcal{G}_k$ is the collection of all $k$-dimensional subspaces of $\mathcal{H}$, $P_E$ is the orthogonal projection from $\mathcal{H}$ onto $E$, and $\Pi(\mu, \nu)$ is the collection of all couplings of $\mu, \nu$.

\begin{remark}
    $S_k(\cdot, \cdot)$ is a metric on the set of all probability measures $\mu$ on $\mathcal{H}$ such that $\int_{\mathcal{H}} \|x\|^2 \, d\mu(x) < \infty$ \cite[Proposition 1]{Paty}.
\end{remark}

\begin{remark}\label{S_1}
    Note that for $k = 1$, \[S_1(\mu, \nu) = \inf_{\pi \in \Pi(\mu, \nu)} \sup_{\|w\| \leq 1} \left( \int \langle w, (x-y)\rangle^2\text{ } d\pi(x,y) \right)^{1/2}.\]
\end{remark}
\begin{remark}\label{rootk}
    Let $\mathcal{H}$ be a separable Hilbert space and $\mu, \nu$ be two probability measures on $\mathcal{H}$. Then, we have 
    \[S_1(\mu, \nu) \leq S_k(\mu, \nu) \leq \sqrt{k}\cdot S_1(\mu, \nu), \quad 1 \leq k \leq \mathrm{dim}(\mathcal{H}).\]
\end{remark}
\begin{proof}
    The first inequality holds because every 1-dimensional subspace of $\mathcal{H}$ is contained in a $k$-dimensional subspace of $\mathcal{H}$. 
    
    To prove the second inequality $S_k(\mu, \nu) \leq \sqrt{k}\cdot S_1(\mu, \nu)$, fix $\pi \in \Pi(\mu, \nu)$ and $E \in \mathcal{G}_k$. Let $\{w_1, \ldots, w_k\}$ be an orthonormal basis of $E$. Observe that 
    \begin{eqnarray}\label{Projection}
        \int \|P_E(x-y)\|^2\text{ }d\pi(x,y) &=& \int \summ{1}{k} \langle w_i, (x-y)\rangle^2 \text{ }d\pi(x,y) \\ &\leq& k \cdot \sup_{\|w\|\leq 1} \int \langle w, (x-y)\rangle^2 \text{ }d\pi(x,y). \nonumber
    \end{eqnarray} 
    Then, using (\ref{Projection}) and Remark \ref{S_1}, we have \begin{eqnarray*} S_k^2(\mu, \nu) &=& \inf_{\pi \in \Pi(\mu, \nu)}\sup_{E \in \mathcal{G}_k} \int \|P_E(x-y)\|^2\text{ }d\pi(x,y) \\ &\leq& k \cdot \inf_{\pi \in \Pi(\mu, \nu)}\sup_{\|w\|\leq 1}\int \langle w, (x-y)\rangle^2 \text{ }d\pi(x,y) = k \cdot S_1^2(\mu, \nu).
    \end{eqnarray*}
\end{proof}

\begin{theorem}\label{Mainthm}
Suppose that $\mathcal{H}$ is a separable infinite dimensional Hilbert space. Let $\mathcal{P}$ be the collection of all probability measures on the closed unit ball $\{x \in \mathcal{H}: \left\lVert x \right\rVert \leq 1\}$. Given any $\mu \in \mathcal{P}$ and $n \geq 3$; let $\mu_n = \frac{1}{n}\summ{1}{n} \delta_{X_i}$ be the empirical distribution of $\mu$ from $n$ i.i.d samples $X_1, ..., X_n$ of $\mu$. Then,
\begin{equation*}
  \frac{c}{\sqrt{\log n}} \leq \sup_{\mu\in\mathcal{P}}(\mathbb{E}S_1^2(\mu, \mu_n))^{1/2} \leq \frac{C\sqrt{\log \log n}}{\sqrt{\log n}},  
\end{equation*}
where $c, C>0$ are universal constants.  
\end{theorem}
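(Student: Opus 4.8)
The plan is to reformulate $S_1$ in terms of covariance operators and then treat the two bounds separately. For a coupling $\pi\in\Pi(\mu,\mu_n)$ let $T_\pi=\int (x-y)\otimes(x-y)\,d\pi(x,y)$ be the (trace-class) second-moment operator of the displacement; by Remark \ref{S_1}, $\sup_{\|w\|\le1}\int\langle w,x-y\rangle^2\,d\pi=\|T_\pi\|_{\mathrm{op}}$, so that $S_1^2(\mu,\mu_n)=\inf_{\pi\in\Pi(\mu,\mu_n)}\|T_\pi\|_{\mathrm{op}}$. The value of this reformulation is that $\|T_\pi\|_{\mathrm{op}}$ is small exactly when the transport displacement is small in every single direction, and that it interacts well with orthogonal splittings of $\mathcal H$. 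I will write $\Sigma=\int x\otimes x\,d\mu$ and $\Sigma_n=\frac1n\sum_i X_i\otimes X_i$ for the population and empirical second-moment operators and $\lambda_1\ge\lambda_2\ge\cdots$ for the eigenvalues of $\Sigma$, which satisfy $\sum_j\lambda_j=\int\|x\|^2\,d\mu\le1$ and hence $\lambda_{m+1}\le 1/(m+1)$.

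For the upper bound I would fix a truncation level $m$, let $P_m$ be the orthogonal projection onto the span $V_m$ of the top $m$ eigenvectors of $\Sigma$ and $Q_m=I-P_m$, and estimate the two orthogonal parts of the displacement separately. Since $\langle w,x-y\rangle=\langle P_mw,P_m(x-y)\rangle+\langle Q_mw,Q_m(x-y)\rangle$, the inequality $(a+b)^2\le 2a^2+2b^2$ gives $\|T_\pi\|_{\mathrm{op}}\le 2\|P_mT_\pi P_m\|_{\mathrm{op}}+2\|Q_mT_\pi Q_m\|_{\mathrm{op}}$ for every $\pi$, with no cross terms. I would then choose $\pi$ by gluing: take the $W_2$-optimal coupling of the projected measures $P_m\mu$ and $P_m\mu_n$ on $V_m$ and lift it, via disintegration, to a coupling of $\mu$ and $\mu_n$. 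For this $\pi$ the finite-dimensional part obeys $\|P_mT_\pi P_m\|_{\mathrm{op}}\le\mathrm{tr}(P_mT_\pi P_m)=W_2^2(P_m\mu,P_m\mu_n)$, which I bound in expectation by the Fournier–Guillin estimate $\mathbb E\, W_2^2(P_m\mu,P_m\mu_n)\le C_m n^{-2/m}$ for measures on the unit ball of $\mathbb R^m$ (using $m\ge5$). For the tail part, the operator inequality $T_\pi\preceq 2\Sigma+2\Sigma_n$ yields $\|Q_mT_\pi Q_m\|_{\mathrm{op}}\le 2\lambda_{m+1}+2\|Q_m\Sigma_n Q_m\|_{\mathrm{op}}$ for every $\pi$, and since $\|Q_m(\Sigma_n-\Sigma)Q_m\|_{\mathrm{op}}\le\|\Sigma_n-\Sigma\|_{\mathrm{HS}}$ with $\mathbb E\|\Sigma_n-\Sigma\|_{\mathrm{HS}}\le n^{-1/2}$ (a one-line variance computation using $\|x\|\le1$), the tail contributes at most $4\lambda_{m+1}+2n^{-1/2}\le 4/(m+1)+2n^{-1/2}$ in expectation. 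Combining, $\mathbb E\, S_1^2(\mu,\mu_n)\lesssim n^{-2/m}+1/m+n^{-1/2}$, and optimizing the trade-off between $n^{-2/m}$ and $1/m$ at $m\asymp\log n/\log\log n$ gives the claimed $\mathbb E\, S_1^2\lesssim\log\log n/\log n$.

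For the lower bound I would exhibit a single measure, independent of the sample, and exploit that $\mu_n$ is supported on at most $n$ points. Let $m_0=\lceil\log n\rceil$ and take $\mu$ to be the uniform distribution on a cube of side $1/\sqrt{m_0}$ centered at the origin inside an $m_0$-dimensional subspace of $\mathcal H$, so that $\mu$ is supported in the closed unit ball. Because both $\mu$ and $\mu_n$ live in this $m_0$-dimensional subspace, $T_\pi$ has rank at most $m_0$ for every coupling, whence $\|T_\pi\|_{\mathrm{op}}\ge \mathrm{tr}(T_\pi)/m_0$. Taking the infimum over $\pi$ and using $\inf_\pi\mathrm{tr}(T_\pi)=\inf_\pi\int\|x-y\|^2\,d\pi=W_2^2(\mu,\mu_n)$ gives the deterministic bound $S_1^2(\mu,\mu_n)\ge W_2^2(\mu,\mu_n)/m_0$. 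Since $\mu_n$ has at most $n$ atoms, $W_2^2(\mu,\mu_n)$ is at least the $n$-point quantization error of $\mu$, which by the standard quantization lower bound for a spread measure on a cube is $\ge c\,n^{-2/m_0}$ with a universal constant $c$ after normalizing to unit diameter. With $m_0=\lceil\log n\rceil$ one has $n^{-2/m_0}\ge e^{-2}$, so $S_1^2(\mu,\mu_n)\ge c\,e^{-2}/\lceil\log n\rceil\ge c'/\log n$, uniformly in the sample and hence in expectation.

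The main obstacle is controlling the constants in the two dimension-dependent inputs as the truncation level approaches its optimum $m\asymp\log n/\log\log n$: I need the Fournier–Guillin constant $C_m$ in the upper bound to grow at most polynomially in $m$, and the quantization constant in the lower bound to stay bounded below by a universal constant after the unit-ball normalization, since otherwise the delicate balance that produces the $\sqrt{\log\log n}$ gap would be destroyed. The remaining ingredients — the gluing construction of the lifted coupling, the operator inequality $T_\pi\preceq2\Sigma+2\Sigma_n$, and the Hilbert–Schmidt concentration of $\Sigma_n$ — are routine, so the crux is the bookkeeping of these dimensional constants and verifying that the normalized quantization error genuinely remains of order $n^{-2/m_0}$ with a constant that does not decay in $m_0$.
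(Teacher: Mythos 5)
Your upper-bound argument is, at its core, the same as the paper's: truncate to the span of the top $m$ eigenvectors of $\Sigma=\mathbb{E}(X\otimes X)$, use an empirical $W_2$ rate of order $n^{-2/m}$ in the bulk, control the tail by $\lambda_{m+1}\le 1/(m+1)$ plus a covariance concentration term, and optimize at $m\asymp \log n/\log\log n$. The differences are cosmetic-to-minor: you work with a single glued coupling and the operator splitting $\|T_\pi\|_{\mathrm{op}}\le 2\|P_mT_\pi P_m\|_{\mathrm{op}}+2\|Q_mT_\pi Q_m\|_{\mathrm{op}}$, where the paper uses the triangle inequality for the metric $S_1$ with the intermediate measures $P_\#\mu$ and $P_\#\mu_n$; and your tail control via $T_\pi\preceq 2\Sigma+2\Sigma_n$ and $\mathbb{E}\|\Sigma_n-\Sigma\|_{\mathrm{HS}}\le n^{-1/2}$ is actually simpler than the paper's Rudelson-type bound (Lemma \ref{marchresult}), and suffices since $n^{-1/2}$ is negligible at this scale. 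The obstacle you flag is real and is precisely the content of the paper's Lemmas \ref{Wassersteinbounds} and \ref{Cn^-1/t}: the classical Fournier--Guillin constant is not tracked in $d$, so the paper instead invokes Fournier's 2023 fully explicit bound and checks, using $K_d\le 3^d$ from the covering-number estimate, that the resulting constant is universal. Until you supply such a verification (a universal constant, or at worst polynomial growth in $m$ combined with a slightly more aggressive choice $m=\log n/(K\log\log n)$), your upper bound is not complete. You should also handle the small-$n$ regime where $\lfloor \log n/\log\log n\rfloor<5$ by the trivial bound $S_1\le 2$, as the paper does.

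Your lower bound is a genuinely different and valid route. The paper takes $\mu$ uniform on $n$ points that are $\tfrac13$-separated in a $\lceil\log n\rceil$-dimensional ball and shows $\mathbb{E}W_1(\mu,\mu_n)\gtrsim 1$ via the missing-mass/binomial mean-absolute-deviation estimate, then divides by $\sqrt{d}$ using Lemma \ref{SRWAlternateForms}. You instead take a continuous measure (uniform on a cube of diameter $1$ in dimension $m_0=\lceil\log n\rceil$) and use that $\mu_n$ has at most $n$ atoms, so $W_2^2(\mu,\mu_n)$ is bounded below by the $n$-point quantization error, which a volume argument shows is $\gtrsim n^{-2/m_0}\ge e^{-2}$ (the constant is universal after the unit-diameter normalization because $V_{m_0}^{-2/m_0}\asymp m_0$ exactly compensates the side length $m_0^{-1/2}$). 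This gives a deterministic, sample-free bound $S_1^2(\mu,\mu_n)\ge W_2^2(\mu,\mu_n)/m_0\gtrsim 1/\log n$, which is arguably cleaner than the paper's probabilistic argument; your inequality $\|T_\pi\|_{\mathrm{op}}\ge \mathrm{tr}(T_\pi)/m_0$ is exactly the content of Lemma \ref{SRWAlternateForms}. The one step you should write out is the volume computation behind the quantization constant, since that is where the dimension dependence could silently leak in.
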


\begin{remark}
    The upper bound in Theorem \ref{Mainthm} also holds in the finite dimensional setting. Indeed, by Theorem \ref{Mainthm} and Proposition \ref{prop} below, we have \[\sup_{\mu\in\mathcal{P}_d}(\mathbb{E}S_1^2(\mu, \mu_n))^{1/2} \leq \frac{C\sqrt{\log \log n}}{\sqrt{\log n}},\] where $\mathcal{P}_d$ is the collection of all probability measures on $B_2^d = \{x \in \mathbb{R}^d: \|x\|_2\leq 1\}$.
\end{remark}

\begin{remark}
    By Remark \ref{rootk} and Theorem \ref{Mainthm}, we can also bound the $k$-dimensional subspace robust Wasserstein distance as follows: \[\sup_{\mu\in\mathcal{P}}(\mathbb{E}S_k^2(\mu, \mu_n))^{1/2} \leq \frac{C\sqrt{k \log \log n}}{\sqrt{\log n}}.\]
\end{remark}

\subsection{Some definitions and notations}\label{coupling}
Throughout this paper, the following will be used:\\
\begin{itemize}
    \item $\mathcal{H}$ denotes a \textbf{separable infinite dimensional  Hilbert space}. 
    \item Let $X$ be a random vector in $\mathcal{H}$. Then, $\mathbb{E}X$ is the vector in $\mathcal{H}$ such that \[\langle \mathbb{E}X, y \rangle = \mathbb{E}\langle X, y \rangle, \quad y \in \mathcal{H};\] and $\mathbb{E}(X \otimes X)$ is the positive semi-definite operator on $\mathcal{H}$ defined by 
\begin{equation*}
v \mapsto \mathbb{E}(\langle X , v \rangle X).
\end{equation*}For more details on random vectors in Hilbert spaces, refer to \cite{Hilbert}.
    \item $B_2^d \coloneq \{x \in \mathbb{R}^d: \|x\|_2\leq 1\}$ is the \textbf{closed unit ball in $\mathbb{R}^d$}, where $\|\cdot\|_2$ is the Euclidean norm on $\mathbb{R}^d$.
    \item If $A: \mathcal{H} \rightarrow \mathcal{H}$ is a bounded linear operator, then \[\left\lVert A \right\rVert_{\mathrm{op}} \coloneq  \sup_{\|x\| \leq 1} \|Ax\|. \] 
    \item If $(T, \rho)$ is a metric space and $\epsilon > 0$, then the \textbf{covering number} $N(T, \rho, \epsilon)$ is the smallest size possible among all subsets $S \subset T$ such that \[T \subset \underset{x \in S}{\cup}\{y \in T: \rho(x,y)\leq \epsilon\}.\] 
    \item If $(T, \rho)$ is a metric space and $\epsilon > 0$, then the \textbf{packing number} $N_{\mathrm{pack}}(T, \rho, \epsilon)$ is the largest size possible among all subsets $S \subset T$ such that \[ \rho(x,y)>\epsilon, \quad \forall \, x, y \in S,\; x \neq y.\] 
    \item If $E, F$ are sets, $\mu$ is a probability measure on $E$, and $Q:E \rightarrow F$, then $Q_{\#}\mu$ denotes the \textbf{pushforward probability measure} on $F$, i.e., \[Q_{\#}\mu(A) = \mu(Q^{-1}(A)),\] for all measurable $A \subset F$.
    \item If $\mu, \nu$ are two probability measures on a set $E$, then $\Pi(\mu, \nu)$ denotes the collection of all probability measures on $E \times E$ with marginal distributions $\mu$ and $\nu$ with respect to the first and second components respectively.
    \item For any probability measure $\mu$ on a set $E$, we will define $\mu_n = \frac{1}{n}\summ{1}{n}\delta_{X_i}$ to be the \textbf{empirical measure} of $\mu$, where $X_i$'s are independent samples of $\mu$ in $E$. 
\end{itemize}

\subsection{Organization of this paper}
In the rest of this paper, we prove Theorem \ref{Mainthm} after stating some results required for the proof.
\begin{itemize}
    \item In Section 2, we state some results that are useful for the rest of this paper.
    \item In Section 3, we prove the upper bound in Theorem \ref{Mainthm}. 
    \item In Section 4, we prove the lower bound in Theorem \ref{Mainthm}.
\end{itemize}

\section{Some useful preliminary results}

The following result says that the subspace robust Wasserstein distance does not depend on the choice of the ambient space.    
\begin{proposition}\label{prop}
    Let $T$ be an isometry from $\mathbb{R}^d$ into $\mathcal{H}$. Let $\mu, \nu$ be two probability measures on $\mathbb{R}^d$. 
    Then, \[S_1(\mu, \nu) = S_1(T_{\#}\mu, T_{\#}\nu).\] 
\end{proposition}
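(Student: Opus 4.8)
The plan is to establish a bijection between the couplings of $\mu,\nu$ and those of $T_{\#}\mu, T_{\#}\nu$, to check that the inner supremum over unit vectors is preserved under this correspondence, and then to conclude by taking the infimum over couplings. Throughout I would use that $T$, being a linear isometry, satisfies $T^*T = I_{\mathbb{R}^d}$ and $\lVert T^*\rVert_{\mathrm{op}} \leq 1$; I write $P = TT^*$ for the orthogonal projection of $\mathcal{H}$ onto the image $T(\mathbb{R}^d)$.

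For the first step, given $\pi \in \Pi(\mu,\nu)$, the measure $(T\times T)_{\#}\pi$ clearly lies in $\Pi(T_{\#}\mu, T_{\#}\nu)$. Conversely, since $T_{\#}\mu$ and $T_{\#}\nu$ are both supported on the closed $d$-dimensional subspace $T(\mathbb{R}^d)$, every $\tilde\pi \in \Pi(T_{\#}\mu, T_{\#}\nu)$ is concentrated on $T(\mathbb{R}^d)\times T(\mathbb{R}^d)$. As $T$ is a linear isomorphism onto its image with measurable inverse $T^{-1} = T^*|_{T(\mathbb{R}^d)}$, pushing $\tilde\pi$ forward by $T^{-1}\times T^{-1}$ yields a coupling in $\Pi(\mu,\nu)$. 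This shows that $\pi \mapsto (T\times T)_{\#}\pi$ is a bijection from $\Pi(\mu,\nu)$ onto $\Pi(T_{\#}\mu, T_{\#}\nu)$.

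Next I would fix $\pi \in \Pi(\mu,\nu)$ and set $\tilde\pi = (T\times T)_{\#}\pi$. By change of variables, for each $v\in\mathcal{H}$,
\[
\int \langle v, u-u'\rangle^2\, d\tilde\pi(u,u') = \int \langle v, T(x-y)\rangle^2\, d\pi(x,y) = \int \langle T^*v, x-y\rangle^2\, d\pi(x,y).
\]
To see that the supremum over the unit ball of $\mathcal{H}$ equals that over the unit ball of $\mathbb{R}^d$, I would verify both inequalities. For $\leq$: if $\lVert v\rVert \leq 1$ then $w := T^*v$ satisfies $\lVert w\rVert = \lVert Pv\rVert \leq \lVert v\rVert \leq 1$, and the last integral is $\int \langle w, x-y\rangle^2\, d\pi$, bounded by the $\mathbb{R}^d$-supremum. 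For $\geq$: given $w\in\mathbb{R}^d$ with $\lVert w\rVert \leq 1$, set $v := Tw$, so that $\lVert v\rVert = \lVert w\rVert \leq 1$ and $\langle v, T(x-y)\rangle = \langle w, x-y\rangle$, so $\int \langle w, x-y\rangle^2\, d\pi$ occurs among the integrals defining the $\mathcal{H}$-supremum. Hence the two inner suprema coincide for each $\pi$. Taking the infimum over all couplings and invoking the bijection of the first step then gives $S_1(\mu,\nu) = S_1(T_{\#}\mu, T_{\#}\nu)$.

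The main obstacle is the supremum identity, specifically the $\leq$ direction, since $v$ ranges over the infinite-dimensional unit ball of $\mathcal{H}$; the key observation that resolves it is that only the component of $v$ lying in $T(\mathbb{R}^d)$ contributes to the integral, so applying $T^*$ collapses the extraneous directions without increasing the norm. A secondary technical point to handle carefully is the measurability of $T^{-1}$ on the image together with the concentration of the pushforward couplings on $T(\mathbb{R}^d)\times T(\mathbb{R}^d)$, which is what makes the coupling bijection rigorous.
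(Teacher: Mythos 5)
Your proposal is correct and follows essentially the same route as the paper: both first identify $\Pi(T_{\#}\mu,T_{\#}\nu)$ with the pushforwards of couplings in $\Pi(\mu,\nu)$ under $T\times T$, and both then collapse the supremum over the unit ball of $\mathcal{H}$ to the unit ball of $\mathbb{R}^d$ using that only the component in $T(\mathbb{R}^d)$ contributes (the paper phrases this via the orthogonal projection $P$ onto $T(\mathbb{R}^d)$, you via $T^*$ with $P=TT^*$, which is the same observation). No gaps.
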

\begin{proof}
Define $\widetilde{T}: \mathbb{R}^d \times \mathbb{R}^d \rightarrow \mathcal{H}\times \mathcal{H}$ as $(x,y) \mapsto (T(x), T(y))$. Note that by definition, $T$ is injective and $T_{\#}\mu, T_{\#}\nu$ are supported on the range $T(\mathbb{R}^d)$ of $T$. Consider the following claim:
\begin{equation}\label{claim}
    \Pi(T_{\#}\mu, T_{\#}\nu) = \{\widetilde{T}_{\#}\pi: \pi \in \Pi(\mu, \nu) \}.
\end{equation}
\proofpart{}{Proof of Claim}
Let $\gamma \in \Pi(T_{\#}\mu, T_{\#}\nu)$. Then, $\gamma$ is a probability measure on $\mathcal{H}\times 
\mathcal{H}$ and is supported on $\widetilde{T}(\mathbb{R}^d \times \mathbb{R}^d) = T(\mathbb{R}^d) \times T(\mathbb{R}^d)$. Since $\widetilde{T}$ is injective, we can define the probability measure $\pi$ on $\mathbb{R}^d\times \mathbb{R}^d$ by $\pi(S) = \gamma(\widetilde{T}(S))$ for measurable $S \subset \mathbb{R}^d \times \mathbb{R}^d$.  Then, $\pi \in \Pi(\mu, \nu)$ and $\widetilde{T}_{\#}\pi = \gamma$.
This proves that \[\Pi(T_{\#}\mu, T_{\#}\nu) \subset \{\widetilde{T}_{\#}\pi: \pi \in \Pi(\mu, \nu) \}.\]

Conversely, fix $\pi \in \Pi(\mu, \nu)$. Then, $\widetilde{T}_{\#}\pi \in \Pi(T_{\#}\mu, T_{\#}\nu)$. So, \[\{\widetilde{T}_{\#}\pi: \pi \in \Pi(\mu, \nu)\} \subset \Pi(T_{\#}\mu, T_{\#}\nu).\] 
This proves the claim (\ref{claim}). 

Let $P$ be the orthogonal projection from $\mathcal{H}$ onto $T(\mathbb{R}^d)$.
We have,
    \begin{align*}
        &S_1(T\#\mu, T\#\nu)\\ =& \inf_{\gamma \in \Pi(T_{\#}\mu, T_{\#}\nu)} \sup_{\{w \in \mathcal{H}: \|w\| \leq 1\}} \left(\int_{\mathcal{H} \times \mathcal{H}} \langle w, (x-y) \rangle^2 \text{ }d\gamma(x,y) \right)^{1/2} \\ =& \inf_{\gamma \in \Pi(T_{\#}\mu, T_{\#}\nu)} \sup_{\{w \in \mathcal{H}: \|w\| \leq 1\}} \left(\int_{T(\mathbb{R}^d) \times T(\mathbb{R}^d)} \langle w, (x-y) \rangle^2 \text{ }d\gamma(x,y) \right)^{1/2}  \\ =& \inf_{\gamma \in \Pi(T_{\#}\mu, T_{\#}\nu)} \sup_{\{w \in \mathcal{H}: \|w\| \leq 1\}} \left(\int_{T(\mathbb{R}^d) \times T(\mathbb{R}^d)} \langle w, P(x-y) \rangle^2 \text{ }d\gamma(x,y) \right)^{1/2} 
        \\=& \inf_{\gamma \in \Pi(T_{\#}\mu, T_{\#}\nu)} \sup_{\{w \in \mathcal{H}: \|w\| \leq 1\}} \left(\int_{T(\mathbb{R}^d) \times T(\mathbb{R}^d)} \langle Pw, (x-y) \rangle^2 \text{ }d\gamma(x,y) \right)^{1/2}  \\
        =& \inf_{\gamma \in \Pi(T_{\#}\mu, T_{\#}\nu)} \sup_{\{w \in T(\mathbb{R}^d): \|w\| \leq 1\}} \left(\int_{T(\mathbb{R}^d) \times T(\mathbb{R}^d)} \langle w, (x-y) \rangle^2 \text{ } d\gamma(x,y) \right)^{1/2} \\ =& \inf_{\pi \in \Pi(\mu, \nu)} \sup_{\{w \in T(\mathbb{R}^d): \|w\| \leq 1\}} \left(\int_{T(\mathbb{R}^d) \times T(\mathbb{R}^d)} \langle w, (x-y) \rangle^2\text{ } d(\widetilde{T}_{\#}\pi)(x,y) \right)^{1/2} \\ =& \inf_{\pi \in \Pi(\mu, \nu)} \sup_{\{u \in \mathbb{R}^d: \|u\| \leq 1\}} \left(\int_{T(\mathbb{R}^d) \times T(\mathbb{R}^d)} \langle T(u), (x-y) \rangle^2\text{ } d(\widetilde{T}_{\#}\pi)(x,y) \right)^{1/2} \\ =& \inf_{\pi \in \Pi(\mu, \nu)} \sup_{\{u \in \mathbb{R}^d: \| u\| \leq 1\}} \left(\int_{\mathbb{R}^d \times \mathbb{R}^d} \langle T(u), T(x-y) \rangle^2\text{ } d\pi(x,y) \right)^{1/2} \\ =& \inf_{\pi \in \Pi(\mu, \nu)} \sup_{\{u \in \mathbb{R}^d: \|u\| \leq 1\}} \left(\int_{\mathbb{R}^d \times \mathbb{R}^d} \langle u, x-y \rangle^2\text{ } d\pi(x,y) \right)^{1/2}  \\ =& S_1(\mu, \nu), 
    \end{align*}
    
        where the first and last steps follow from Remark \ref{S_1}, second and third steps follow from the fact that every $\pi \in \Pi(T_{\#}\mu, T_{\#}\nu)$ is supported on $T(\mathbb{R}^d) \times T(\mathbb{R}^d)$, the fourth and fifth steps follow from the definition of $P$, the sixth step follows from the claim (\ref{claim}), the seventh and ninth steps follow from the fact that $T$ is an isometry, and the eight step follows from a change of variables. This completes the proof. 
\end{proof}

\subsection{Mean Absolute Deviation Estimate for binomial random variable}

\begin{lemma}[\cite{Berend}, Theorem 1]\label{mad}
    If $X$ is a real-valued random variable distributed according to the binomial distribution $\mathrm{binomial} (n, p)$ where $n \geq 2, p \in \left[\frac{1}{n}, 1-\frac{1}{n}\right]$, then: \begin{equation*}
        \mathbb{E}|X-\mathbb{E}X| \geq \frac{1}{\sqrt{2}}\sqrt{\mathbb{E}|X-\mathbb{E}X|^2}.
    \end{equation*}
\end{lemma}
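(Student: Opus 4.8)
The plan is to reduce the claim to a clean comparison using the classical closed form for the binomial mean absolute deviation. Write $q = 1-p$, $\mu = np$ and $m = \lceil np \rceil$, so that $0 \le m - \mu < 1$. Since $\mathbb{E}(X-\mu)=0$, one has $\mathbb{E}|X-\mu| = 2\sum_{k \ge m}(k-\mu)P(X=k)$. Decomposing $X = Y + B$ with $Y \sim \mathrm{binomial}(n-1,p)$ and $B \sim \mathrm{Bernoulli}(p)$ independent, and using the identity $k\,P(X=k) = np\,P(Y=k-1)$, this sum telescopes to De Moivre's identity
\[
\mathbb{E}|X-np| = 2\,m\,(1-p)\binom{n}{m}p^{m}(1-p)^{n-m}.
\]
Because both $\mathbb{E}|X-np|$ and $\mathrm{Var}(X) = np(1-p)$ are invariant under $X \mapsto n-X$, $p \mapsto 1-p$, I may assume $p \le \tfrac12$. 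The lemma then becomes equivalent to the single inequality
\[
g(n,p) = \frac{2\,m\,(1-p)\binom{n}{m}p^{m}(1-p)^{n-m}}{\sqrt{np(1-p)}} \ge \frac{1}{\sqrt2}, \qquad m = \lceil np \rceil .
\]

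First I would treat the bulk regime, in which $\sigma^2 = np(1-p)$ exceeds some absolute constant. Here Stirling's formula with explicit two-sided error bounds gives the local estimate $\binom{n}{m}p^{m}q^{n-m} = \tfrac{1}{\sqrt{2\pi\sigma^2}}\exp\!\big(-\tfrac{(m-\mu)^2}{2\sigma^2}\big)(1+o(1))$. Since $m(1-p) = \sigma^2(1+o(1))$ and $|m-\mu| < 1$, this yields $g(n,p) = \sqrt{2/\pi}\,e^{-(m-\mu)^2/2\sigma^2}(1+o(1))$, which tends to $\sqrt{2/\pi} \approx 0.80$ and stays above $1/\sqrt2 \approx 0.71$ with a uniform margin once $\sigma^2$ is sufficiently large. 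Thus the inequality holds comfortably away from the boundary of the admissible range of $p$.

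The main obstacle is the boundary regime, where $p$ is close to $1/n$ and the constraint $p \in [\tfrac1n, 1-\tfrac1n]$ is active, so that $\sigma^2$ is of order $1$, the Gaussian approximation is too crude, and the constant $1/\sqrt2$ is genuinely sharp rather than slack (so no lossy estimate will do). I would control this range directly: bound $P(X=m)$ from below by non-asymptotic Stirling inequalities and analyze the monotonicity of $g(n,p)$ in $p$ (and in $n$) over this low-variance region. I expect $g$ to attain its minimum at the smallest admissible configuration $n=2$, $p=\tfrac12$, where $\mathbb{E}|X-np| = \tfrac12$ and $\sqrt{np(1-p)} = 1/\sqrt2$ give exactly $g = 1/\sqrt2$; the hypotheses $np \ge 1$ and $n(1-p) \ge 1$ are precisely what prevent $X$ from being so nearly deterministic that $g$ drops below $1/\sqrt2$. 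The delicate point is to make the handoff between the two regimes airtight — choosing the threshold on $\sigma^2$ and carrying the explicit Stirling remainders so that the bulk and boundary cases together cover all admissible $(n,p)$ with no gap and with the sharp constant preserved.
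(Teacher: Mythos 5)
The paper does not prove this lemma at all: it is imported verbatim as Theorem~1 of Berend--Kontorovich \cite{Berend}, so there is no internal proof to compare against. Judged on its own terms, your proposal gets the right skeleton but stops short of a proof. The reduction via De Moivre's identity $\mathbb{E}|X-np| = 2m(1-p)\binom{n}{m}p^{m}(1-p)^{n-m}$ with $m=\lceil np\rceil$ is correct (your telescoping derivation through $X = Y+B$ is the standard and valid one), the symmetry reduction to $p\le \tfrac12$ is fine, and the bulk heuristic correctly identifies the limiting constant $\sqrt{2/\pi}>1/\sqrt{2}$. The equality case $n=2$, $p=\tfrac12$ is also correctly identified.

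The genuine gap is that the entire difficulty of the lemma lives in the low-variance regime, and there you only announce intentions ("I would control this range directly," "I expect $g$ to attain its minimum at $n=2$, $p=\tfrac12$") without carrying out any argument. Since the constant $1/\sqrt{2}$ is attained at an admissible point, there is no slack anywhere: you must actually prove the monotonicity or extremality of $g(n,p)$ over the full boundary region, and this is precisely the technical content of \cite{Berend} (their proof runs through careful case analysis and non-asymptotic Stirling bounds, not a soft limit). Likewise, your bulk estimate carries an unquantified $(1+o(1))$ and no explicit threshold on $\sigma^2$, so the "handoff" you flag as delicate is not merely delicate but entirely absent; as written the two regimes do not cover the admissible set with verified constants. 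In short: correct identity, correct target, plausible roadmap, but the sharp-constant verification that constitutes the theorem is missing. Given that the paper itself treats this as a black-box citation, the efficient fix is to do the same rather than reprove it.
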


\subsection{Covering and Packing Number of Closed Unit Ball}

\begin{lemma}[\cite{Vershynin}, Proposition 4.2.12]\label{covpack}
    For every $0 < \epsilon < 1$,
    \begin{equation*}
        \left(\frac{1}{\epsilon} \right)^d \leq N(B_2^d, \left\lVert \cdot \right\rVert, \epsilon) \leq N_{\mathrm{pack}}(B_2^d, \left\lVert \cdot \right\rVert, \epsilon) \leq \bigg(\frac{3}{\epsilon} \bigg)^d.
    \end{equation*}
\end{lemma}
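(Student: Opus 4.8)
The plan is to prove all three inequalities by elementary volume comparisons in $\mathbb{R}^d$, relying only on the scaling identity $\mathrm{vol}(r B_2^d) = r^d\,\mathrm{vol}(B_2^d)$ for $r>0$ and $\mathrm{vol}(v + K) = \mathrm{vol}(K)$ for translates, together with the standard metric fact that a maximum packing is automatically a covering. No special structure of $B_2^d$ is needed beyond these, so the three bounds will be handled essentially independently. Since the statement is quoted from \cite{Vershynin}, one could alternatively just cite it, but I would include the short argument for completeness.

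First I would prove the left inequality $(1/\epsilon)^d \le N(B_2^d,\lVert\cdot\rVert,\epsilon)$. Let $\{x_1,\dots,x_N\}$ be any $\epsilon$-cover of $B_2^d$, so that $B_2^d \subset \bigcup_{i=1}^{N}\{y:\lVert y-x_i\rVert\le\epsilon\} = \bigcup_{i=1}^N (x_i+\epsilon B_2^d)$. Taking volumes and using subadditivity gives $\mathrm{vol}(B_2^d)\le \sum_{i=1}^N \mathrm{vol}(x_i+\epsilon B_2^d)=N\,\epsilon^d\,\mathrm{vol}(B_2^d)$, and dividing by $\mathrm{vol}(B_2^d)>0$ yields $N\ge \epsilon^{-d}$. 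As this holds for every cover, it holds for a minimal one, proving the claim.

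Next I would establish the middle inequality $N(B_2^d,\lVert\cdot\rVert,\epsilon)\le N_{\mathrm{pack}}(B_2^d,\lVert\cdot\rVert,\epsilon)$, which is purely metric. Take a set $S\subset B_2^d$ of maximum cardinality $N_{\mathrm{pack}}(B_2^d,\lVert\cdot\rVert,\epsilon)$ with $\lVert x-y\rVert>\epsilon$ for all distinct $x,y\in S$. Then no point of $B_2^d$ can be adjoined to $S$ while preserving $\epsilon$-separation, so every $z\in B_2^d$ satisfies $\lVert z-x\rVert\le\epsilon$ for some $x\in S$; that is, $S$ is an $\epsilon$-cover, giving $N(B_2^d,\lVert\cdot\rVert,\epsilon)\le |S|=N_{\mathrm{pack}}(B_2^d,\lVert\cdot\rVert,\epsilon)$. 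For the right inequality $N_{\mathrm{pack}}(B_2^d,\lVert\cdot\rVert,\epsilon)\le (3/\epsilon)^d$, let $\{x_1,\dots,x_N\}\subset B_2^d$ be $\epsilon$-separated. The balls $x_i+\tfrac{\epsilon}{2}B_2^d$ then have pairwise disjoint interiors, since a common point would force $\lVert x_i-x_j\rVert\le\epsilon$ by the triangle inequality, contradicting $\lVert x_i-x_j\rVert>\epsilon$; and each such ball lies in $\bigl(1+\tfrac{\epsilon}{2}\bigr)B_2^d\subset \tfrac{3}{2}B_2^d$ because $\epsilon<1$. Comparing the volume of the disjoint union with that of the enclosing ball gives $N\,(\epsilon/2)^d\,\mathrm{vol}(B_2^d)\le (3/2)^d\,\mathrm{vol}(B_2^d)$, hence $N\le (3/\epsilon)^d$; maximizing over packings yields the bound.

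The only point requiring care is the bookkeeping in the last step: one must check that strict $\epsilon$-separation is exactly what makes the half-radius balls have disjoint interiors, and that the hypothesis $\epsilon<1$ is precisely what keeps the inflated ball inside $\tfrac{3}{2}B_2^d$ (so that the constant $3$ appears). Everything else is routine scaling, and no properties of the Euclidean structure beyond translation invariance and homogeneity of volume enter the argument.
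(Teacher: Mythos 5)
Your argument is correct. The paper does not prove this lemma at all --- it simply cites it as Proposition 4.2.12 of \cite{Vershynin} --- and your volume-comparison proof (cover implies volume lower bound; maximal packing is a cover; disjoint half-radius balls inside the inflated ball give the upper bound) is precisely the standard argument given in that reference, so there is nothing to add.
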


\subsection{Bound on Expected Sample Covariance}
The following result is known in the finite dimensional setting \cite[Theorem 5.48]{Vershynin2}. We extend this result to the infinite dimensional case by slightly modifying the proof. 
\begin{lemma}\label{marchresult}
Let $r > 0$. Suppose that $\mu$ is a probability measure on $\mathcal{H}$ supported on $\{x \in \mathcal{H}: \left\lVert x \right\rVert \leq r\}$. Let $X_1, X_2, \ldots, X_n$ be i.i.d random vectors in $\mathcal{H}$ sampled according to $\mu$. Then, 
    \begin{equation*}
        \mathbb{E}\left\|\summ{1}{n} X_i \otimes X_i \right\|_{\mathrm{op}} \leq 2n \left\| \mathbb{E} (X_1 \otimes X_1) \right\|_{\mathrm{op}}+Cr^2 \log n,
    \end{equation*}
    where $C>0$ is a universal constant. 
\end{lemma}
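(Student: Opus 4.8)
The plan is to split $\mathbb{E}\bigl\|\summ{1}{n} X_i\otimes X_i\bigr\|_{\mathrm{op}}$ into a ``mean'' part and a ``fluctuation'' part, and then to control the fluctuation by reducing the problem to dimension $n$. Write $S = \summ{1}{n} X_i\otimes X_i$, $\Sigma = \mathbb{E}(X_1\otimes X_1)$, and $M = \|\Sigma\|_{\mathrm{op}}$. Since the $X_i$ are i.i.d., $\mathbb{E}S = n\Sigma$, and as $\Sigma$ is positive semidefinite the triangle inequality for $\|\cdot\|_{\mathrm{op}}$ gives
\[
\mathbb{E}\|S\|_{\mathrm{op}} \le \|\mathbb{E}S\|_{\mathrm{op}} + \mathbb{E}\|S-\mathbb{E}S\|_{\mathrm{op}} = nM + \mathbb{E}\Bigl\|\summ{1}{n}\bigl(X_i\otimes X_i - \Sigma\bigr)\Bigr\|_{\mathrm{op}}.
\]
Everything then reduces to bounding the centered sum, which is a sum of i.i.d.\ mean-zero random operators with $\|X_i\otimes X_i\|_{\mathrm{op}} = \|X_i\|^2 \le r^2$.

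The key observation that makes the infinite-dimensional case behave like a dimension-$n$ problem is that all the samples lie in the random subspace $V = \mathrm{span}(X_1,\dots,X_n)$, whose dimension is at most $n$. Indeed, for $u \in V^\perp$ we have $(X_i\otimes X_i)u = \langle X_i,u\rangle X_i = 0$, while the range of each $X_i\otimes X_i$ is contained in $V$; hence $S$ and the symmetrized operator below both vanish on $V^\perp$ and map $\mathcal{H}$ into $V$, so their operator norms equal the operator norms of their restrictions to the at-most-$n$-dimensional space $V$. This is exactly the point at which the proof of the finite-dimensional statement \cite[Theorem 5.48]{Vershynin2} needs only a ``slight modification'': the ambient dimension $d$ enters that argument solely through a $\sqrt{\log d}$ factor coming from Rudelson's inequality (the non-commutative Khintchine inequality), and this factor may be replaced throughout by $\sqrt{\log(\dim V)}\le\sqrt{\log n}$.

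Carrying this out, first I would symmetrize: by the standard symmetrization inequality in the Banach space of bounded operators on $\mathcal{H}$,
\[
\mathbb{E}\Bigl\|\summ{1}{n}\bigl(X_i\otimes X_i-\Sigma\bigr)\Bigr\|_{\mathrm{op}} \le 2\,\mathbb{E}\Bigl\|\summ{1}{n}\epsilon_i\,X_i\otimes X_i\Bigr\|_{\mathrm{op}},
\]
where $\epsilon_1,\dots,\epsilon_n$ are independent Rademacher signs, independent of the $X_i$. Conditioning on $X_1,\dots,X_n$ and applying Rudelson's inequality to the rank-one operators $X_i\otimes X_i$ viewed on $V$ (so the logarithmic factor is $\sqrt{\log(\dim V)}\le\sqrt{\log n}$), together with $\max_i\|X_i\|\le r$, yields
\[
\mathbb{E}_\epsilon\Bigl\|\summ{1}{n}\epsilon_i\,X_i\otimes X_i\Bigr\|_{\mathrm{op}} \le C\sqrt{\log n}\;r\;\|S\|_{\mathrm{op}}^{1/2}.
\]
Taking expectation over the $X_i$ and using Jensen's inequality $\mathbb{E}\|S\|_{\mathrm{op}}^{1/2}\le(\mathbb{E}\|S\|_{\mathrm{op}})^{1/2}$, and writing $a = (\mathbb{E}\|S\|_{\mathrm{op}})^{1/2}$, I obtain the self-bounding quadratic inequality
\[
a^2 = \mathbb{E}\|S\|_{\mathrm{op}} \le nM + 2Cr\sqrt{\log n}\;a.
\]
Solving this quadratic in $a$ and applying $(x+y)^2\le 2x^2+2y^2$ then gives $\mathbb{E}\|S\|_{\mathrm{op}} = a^2 \le 2nM + C'r^2\log n$, which is the asserted bound with a universal constant $C'$.

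The main obstacle is precisely the dimension-reduction step: in a genuinely infinite-dimensional $\mathcal{H}$ the naive Rudelson/Khintchine estimate carries a $\sqrt{\log(\dim\mathcal{H})}$ factor that is infinite, so the whole argument hinges on first passing to $V$ and checking that the operator norm is unaffected by this restriction. The remaining ingredients—measurability of $V$ and of $\max_i\|X_i\|$, the validity of symmetrization and of the non-commutative Khintchine inequality for operators on a separable Hilbert space, and the elementary algebra of the quadratic—are routine; the one point I would state carefully is the infinite-dimensional form of Rudelson's inequality, which can itself be reduced to the finite-dimensional case on $V$ via an isometry, in the spirit of Proposition \ref{prop}.
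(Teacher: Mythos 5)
Your proposal is correct and follows essentially the same route as the paper's proof: symmetrization with Rademacher signs, Rudelson's inequality applied conditionally on the samples with the logarithmic factor controlled by $n$ rather than the ambient dimension (the paper invokes \cite[Corollary 5.28]{Vershynin2} on the finite-dimensional span of the fixed vectors, which is your subspace $V$ in deterministic form), Jensen's inequality, and solving the resulting quadratic self-bound. The only differences are cosmetic ordering of the steps.
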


\begin{proof}
Let $\epsilon_1, \ldots, \epsilon_n$ be independent Rademacher random variables that are independent of $X_1, \ldots, X_n$. Fix vectors $x_{1},\ldots,x_{n}\in\mathcal{H}$ with $\|x_{i}\|\leq r$ for all $i$. Since $x_1, \ldots, x_n$ lie in a finite dimensional subspace of $\mathcal{H}$, we can apply \cite[Corollary 5.28]{Vershynin2} to get 
\[ \mathbb{E}\left\|\summ{1}{n} \epsilon_i x_i \otimes x_i \right\|_{\mathrm{op}} \leq C \sqrt{\log n} \cdot r \cdot \left\| \summ{1}{n} x_i \otimes x_i \right\|_{\mathrm{op}}^{1/2},\] 
where $C>0$ is a universal constant. By randomizing $x_1, \ldots, x_n$, we obtain 
\begin{equation}\label{rmboundeq}
\mathbb{E}\left\|\sum_{i=1}^{n} \epsilon_{i}X_{i} \otimes X_{i} \right\|_{\mathrm{op}}\leq Cr\sqrt{\log n}\left(\mathbb{E}\left\|\sum_{i=1}^{n}  X_{i} \otimes X_{i} \right\|_{\mathrm{op}}\right)^{\frac{1}{2}}, 
\end{equation} where the expectations are over $X_1, \ldots, X_n$ and $\epsilon_1, \ldots, \epsilon_n$. 
By symmetrization \cite[Lemma 5.46]{Vershynin2},
\[\mathbb{E}\left\|\sum_{i=1}^{n}[X_{i} \otimes X_{i}-\mathbb{E}(X_{i} \otimes X_{i})]\right\|_{\mathrm{op}}\leq 2\cdot\mathbb{E}\left\|\sum_{i=1}^{n}  \epsilon_{i}X_{i} \otimes X_i  \right\|.\]
Thus,
\begin{eqnarray*}
\mathbb{E}\left\|\sum_{i=1}^{n} X_{i} \otimes X_{i}  \right\|_{\mathrm{op}}&\leq&\|n\mathbb{E}(X_{1} \otimes X_{1})\|_{\mathrm{op}}+2\cdot\mathbb{E}\left\|\sum_{i=1}^{n}  \epsilon_{i}X_{i} \otimes X_{i}  \right\|_{\mathrm{op}}\\&\leq&
n\|\mathbb{E}(X_{1} \otimes X_{1})\|_{\mathrm{op}}+2Cr\sqrt{\log n}\left(\mathbb{E}\left\|\sum_{i=1}^{n} X_{i} \otimes X_{i} \right\|_{\mathrm{op}}\right)^{\frac{1}{2}},
\end{eqnarray*}
where the last inequality follows from (\ref{rmboundeq}). Letting $a = 
\mathbb{E}\left\|\sum_{i=1}^{n}X_{i} \otimes X_{i}\right\|_{\mathrm{op}}$, $b=n\|\mathbb{E}(X_{1} \otimes X_{1})\|_{\mathrm{op}}$, $c = 2Cr\sqrt{\log n}$, we have $a \leq b+ c\sqrt{a}$. By completing the square, we obtain $a \leq 2b + 2c^2$, so
\[\mathbb{E}\left\|\sum_{i=1}^{n}  X_{i} \otimes X_{i} \right\|_{\mathrm{op}}\leq 2n\|\mathbb{E} (X_{1} \otimes X_{1})\|_{\mathrm{op}}+8C^2r^{2}\log n.\]
\end{proof}

\subsection{Some results on Wasserstein distance}

\begin{lemma}\label{W1foreg}
    If $M$ is an $\epsilon$-separating subset of $B_2^d$, i.e., \begin{equation*}
        \left\|x-y\right\| > \epsilon \quad \forall\, x,y \in M, x \neq y ,
    \end{equation*}
    and $\mu, \nu$ are probability measures on $M$, then 
    \begin{equation*}
    W_1(\mu, \nu) \geq \frac{\epsilon}{2}\underset{x \in M}{\sum}|\mu\{x\} - \nu\{x\}|.
    \end{equation*}
\end{lemma}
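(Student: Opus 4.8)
The plan is to bound the transport cost of an arbitrary coupling from below and then pass to the infimum. First I would note that since $M$ is $\epsilon$-separated inside the bounded set $B_2^d$, it must be a \emph{finite} set (a packing of a bounded set in $\mathbb{R}^d$ is finite). Hence $\mu$ and $\nu$ are discrete measures, and I can write $p_x := \mu\{x\}$ and $q_x := \nu\{x\}$ for $x \in M$. Any $\pi \in \Pi(\mu,\nu)$ is supported on $M \times M$ because both marginals live on $M$, so $\pi$ is determined by the finitely many numbers $\pi_{xy} := \pi\{(x,y)\}$, subject to the marginal constraints $\sum_{y} \pi_{xy} = p_x$ and $\sum_{x} \pi_{xy} = q_y$.

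Next I would separate the diagonal from the off-diagonal contribution to the cost. Since $\|x-y\| = 0$ when $x = y$ and $\|x - y\| > \epsilon$ when $x \neq y$, I obtain
\[
\int \|x-y\|\,d\pi(x,y) = \sum_{x \neq y} \|x-y\|\,\pi_{xy} \;\geq\; \epsilon \sum_{x \neq y}\pi_{xy} = \epsilon\Big(1 - \sum_{x \in M}\pi_{xx}\Big).
\]

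The key step is to control the diagonal mass $\sum_x \pi_{xx}$. From the two marginal constraints, $\pi_{xx} \leq \sum_y \pi_{xy} = p_x$ and $\pi_{xx} \leq \sum_{x'}\pi_{x'x} = q_x$, so $\pi_{xx} \leq \min(p_x, q_x)$ and therefore $\sum_x \pi_{xx} \leq \sum_x \min(p_x, q_x)$. I then invoke the elementary total-variation identity $\sum_x |p_x - q_x| = 2 - 2\sum_x \min(p_x, q_x)$ (which follows termwise from $|p_x - q_x| = p_x + q_x - 2\min(p_x,q_x)$), giving
\[
1 - \sum_{x \in M}\pi_{xx} \;\geq\; 1 - \sum_{x \in M}\min(p_x, q_x) \;=\; \frac{1}{2}\sum_{x \in M}|p_x - q_x|.
\]
Combining this with the previous display shows that $\int \|x-y\|\,d\pi \geq \frac{\epsilon}{2}\sum_{x\in M}|\mu\{x\}-\nu\{x\}|$ for \emph{every} coupling $\pi$, and taking the infimum over $\pi \in \Pi(\mu,\nu)$ yields the claimed bound on $W_1(\mu,\nu)$.

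I do not expect a serious obstacle here: the entire argument rests on the bound $\pi_{xx} \leq \min(p_x, q_x)$ together with the total-variation identity. The only point requiring care is the initial reduction to a finite, discrete setting, which legitimizes writing $\pi$ as the matrix $(\pi_{xy})$ and manipulating the sums freely.
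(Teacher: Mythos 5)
Your proposal is correct and follows essentially the same route as the paper's proof: reduce to the finite discrete setting, bound the off-diagonal transport cost below by $\epsilon\bigl(1-\sum_{x}\pi\{(x,x)\}\bigr)$, control the diagonal mass via $\pi\{(x,x)\}\leq\min\{\mu\{x\},\nu\{x\}\}$, and finish with the identity $a+b-2\min\{a,b\}=|a-b|$. No gaps.
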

\begin{proof}
    Observe that since $M$ is $\epsilon$-separating, $M$ is a finite set. We have 
    \begin{eqnarray*}\label{W1fin}
        W_1(\mu, \nu) &=& \inf_{\pi \in \Pi(\mu, \nu)} \sum_{x,y \in M}\left\|x - y\right\| \pi(\{(x,y)\}) \\ &\geq& \epsilon \cdot \inf_{\pi \in \Pi(\nu, \mu)} \sum_{\substack{x,y \in M\\x\neq y}}\pi(\{(x,y)\}) \\ 
        &=& \epsilon \cdot \inf_{\pi \in \Pi(\mu, \nu)} \left( 1 - \sum_{x \in M} \pi(\{(x,x)\}) \right)
    \end{eqnarray*}
    Since \[ \pi(\{(x,x)\}) \leq \min\{\pi(\{x\} \times M), \pi(M \times \{x\})\} = \min\{\mu(\{x\}), \nu(\{x\})\},\]
    for every $x \in M$ and every $\pi \in \Pi(\mu, \nu)$, it follows that
    \begin{equation*}
        W_1(\mu, \nu) \geq \epsilon \left( 1 - \sum_{x \in M}\min\{\mu(\{x\}),\nu(\{x\})\} \right) = \epsilon \left[\frac{1}{2}\sum_{x\in M}|\mu\{x\} - \nu\{x\}|\right],
    \end{equation*}
    where the last equality follows because for all $a,b \in \mathbb{R}$, $a+b-2 \min\{a,b\} = |a-b|.$
\end{proof}

\begin{lemma}\label{Wassersteinbounds}
Let $d \geq 5$. Suppose that $\mu$ is a probability measure on $\mathbb{R}^d$. Let $\mu_n$ be the empirical distribution of $\mu$. Let 
\begin{eqnarray*}
K_d &=& \underset{\epsilon \in (0,1]}{sup} \epsilon^d N(\{x : \|x\|_2 \leq 1\}, \|\cdot\|_2, \epsilon),\\
H(x,s,q) &=& \left(x \cdot \left(\frac{q-s}{s}\right) + (1+x) \cdot \left(\frac{q}{s}\right)^{q/(q-s)} \right)^{s/q} \cdot \left(\frac{q}{q-s}\right),\\
\kappa_{d,r} &=& \left(\frac{K_d}{4}\right)^{2/d}\cdot \frac{r^{4}\cdot(1-r^{-d/2})^{1-2(2/d)}}{(r-1)^2(1-r^{2-(d/2)})}, \mathrm{ and}\\
\kappa_{d} &=& \min\{\kappa_{d, r} : r\geq 2\}.
\end{eqnarray*} If $q > 2d/(d-2)$, then
\begin{equation*}
    \mathbb{E}(W_2^2(\mu, \mu_n)) \leq 4 \frac{\kappa_{d}}{n^{2/d}} \left[ \int_{\mathbb{R}^d} \lVert x \rVert_2^q d\mu(x) \right]^{2/q} \cdot H\left( \frac{2^{-(4/d)}}{\kappa_d}, \frac{2d}{d-2}, q\right).
\end{equation*}
\end{lemma}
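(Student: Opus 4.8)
The plan is to adapt the multiscale partitioning argument of Fournier and Guillin to the Euclidean ball, keeping every constant explicit. The overall idea is to cover $\mathbb{R}^d$ by a nested family of partitions built from $\epsilon$-nets of balls, so that the covering bound of Lemma \ref{covpack} — and hence the constant $K_d$ — governs the number of cells at each scale; then transport mass to the cell representatives scale by scale, and bound the induced cost by the fluctuations $|\mu(Q)-\mu_n(Q)|$ of the empirical measure over cells $Q$. Since $\mu$ is not assumed to be compactly supported, I would organize the construction around the annuli $S_0 = \{\|x\|_2 \le 1\}$ and $S_k = \{r^{k-1} \le \|x\|_2 < r^k\}$ for $k \ge 1$, where $r \ge 2$ is a free base to be optimized only at the very end; this last optimization is exactly the $\kappa_d = \min_{r \ge 2}\kappa_{d,r}$ in the statement.

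First I would prove a deterministic transport inequality of the form $W_2^2(\mu,\mu_n) \lesssim \sum_{k\ge0}\sum_{\ell\ge0}\mathrm{diam}_{k,\ell}^2 \sum_{Q}|\mu(Q)-\mu_n(Q)|$, where at resolution $\ell$ the cells inside $S_k$ have diameter $\asymp r^{k-\ell}$ and, by the $\epsilon$-net construction together with the definition of $K_d$ via Lemma \ref{covpack} (a ball of radius $R$ is covered by $\le K_d (R/\delta)^d$ balls of radius $\delta$), their number is at most $K_d\, r^{\ell d}$. Taking expectations and using Jensen's inequality with the binomial variance, $\mathbb{E}|\mu_n(Q)-\mu(Q)| \le \sqrt{\operatorname{Var}\mu_n(Q)} \le \sqrt{\mu(Q)/n}$, followed by Cauchy--Schwarz over the at most $K_d r^{\ell d}$ cells of $S_k$, I would replace the inner sum by $\sqrt{K_d r^{\ell d}\,\mu(S_k)/n}$.

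Next I would carry out the two geometric summations. Summing over the resolution index $\ell$, and truncating at the scale where the fluctuation term balances the within-cell residual (which is what produces the rate $n^{-2/d}$), leaves a per-annulus bound of order $r^{2k}\mu(S_k)^{1-2/d}\, n^{-2/d}$; the convergence of the resolution series is where $d > 4$ (hence the hypothesis $d \ge 5$) is used, and these geometric summations are the source of the purely dimensional factors $(1-r^{-d/2})^{1-2(2/d)}$, $(1-r^{2-d/2})^{-1}$, $(K_d/4)^{2/d}$, and $r^4/(r-1)^2$ that make up $\kappa_{d,r}$. It then remains to bound $\sum_{k} r^{2k}\mu(S_k)^{1-2/d}$ by the moment $\int \|x\|_2^q\, d\mu$. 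Writing $1-2/d = 2/s$ with $s = 2d/(d-2)$ and applying Hölder's inequality with exponent $s/2$ and its conjugate — equivalently, splitting each annulus according to whether $n\mu(S_k)$ is large or small and balancing the two regimes by Young's inequality — converts the sum into $\bigl(\int\|x\|_2^q\, d\mu\bigr)^{2/q}$ times a $q$-dependent factor whose associated geometric series converges precisely when $q > 2d/(d-2) = s$; careful bookkeeping of that factor is what yields the function $H\!\left(2^{-4/d}/\kappa_d,\, s,\, q\right)$.

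The conceptual skeleton (partition, binomial variance, Cauchy--Schwarz, Hölder against the moment) is standard; the main obstacle is the constant bookkeeping. Concretely, I expect the delicate points to be: (i) building the nested partitions from $\epsilon$-nets so that the cell counts are controlled uniformly by $K_d$ across all annuli and resolutions, and verifying the deterministic transport inequality with the stated diameters; (ii) choosing the truncation level in the resolution sum so as to extract exactly the $n^{-2/d}$ rate while keeping the dimensional constants in the closed form $\kappa_{d,r}$; and (iii) the final Hölder/Young step, where the two-regime split and the convergent $q$-series must be organized to reproduce the exact function $H$ rather than merely an $O(1)$ constant. Optimizing over the base $r \ge 2$ at the end replaces $\kappa_{d,r}$ by $\kappa_d$ and completes the proof.
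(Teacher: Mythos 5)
The paper's own proof of this lemma is a single line: it specializes \cite[Theorem 2.1 (iii)]{Fournieralone} to $m=2$, $p=2$, $d\ge 5$; the definitions of $K_d$, $H$, $\kappa_{d,r}$, and $\kappa_d$ in the statement are lifted verbatim from that theorem precisely so that nothing has to be re-derived. You instead set out to reprove the result from scratch via the Fournier--Guillin multiscale partitioning argument. Your skeleton is the right one and is indeed the machinery behind the cited theorem: annuli $S_k$ with a free base $r\ge 2$, nested partitions with at most $K_d r^{\ell d}$ cells per resolution controlled through Lemma \ref{covpack}, the variance bound $\mathbb{E}|\mu_n(Q)-\mu(Q)|\le\sqrt{\mu(Q)/n}$, Cauchy--Schwarz over cells, the identity $1-2/d=2/s$ with $s=2d/(d-2)$, and a H\"older step against the $q$-th moment whose geometric series converges exactly when $q>s$. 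All of that is accurate.

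As a proof of the lemma \emph{as stated}, however, there is a genuine gap: the entire content of the statement is the explicit form of the constants --- the prefactor $4$, the argument $2^{-4/d}/\kappa_d$ fed into $H$, and the exact expression for $\kappa_{d,r}$ --- and these are precisely what you defer to ``careful bookkeeping'' at all three of the delicate points you yourself flag, without carrying any of them out. This is not cosmetic here: the only reason the paper needs this lemma in such explicit form is that Lemma \ref{Cn^-1/t} must produce a constant that is bounded \emph{uniformly in $d$}, which the paper verifies by plugging $K_d\le 3^d$ and $r=2$ into the closed-form $\kappa_{d,2}$. A conclusion of the shape ``$C(d,q,r)\,n^{-2/d}$ for some constant arising from the geometric sums'' would not support that step. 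So you must either actually execute the bookkeeping until it reproduces $H$ and $\kappa_{d,r}$ exactly, or --- far more economically, and what the paper does --- simply invoke \cite[Theorem 2.1 (iii)]{Fournieralone}, of which the statement is a direct specialization.
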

\begin{proof}
Apply $m = 2, p = 2, d \geq 5$ in \cite[Theorem 2.1 (iii)]{Fournieralone}. 
\end{proof}

We will use the above Lemma \ref{Wassersteinbounds} to prove the next Lemma. It is important that the constant $C$ below does not depend on $d$.

\begin{lemma}\label{Cn^-1/t}
    Let $d \geq 5$. Suppose that $\mu$ is a probability measure on the closed unit ball $B_2^d$. Then, 
    \begin{equation*}
         \mathbb{E}W_2^2(\mu, \mu_n) \leq C \cdot n^{-2/d},
    \end{equation*}where $C > 0$ is a universal constant.
\end{lemma}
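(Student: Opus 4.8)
The plan is to invoke Lemma \ref{Wassersteinbounds} with a single \emph{fixed} exponent $q$ and then to show that, after discarding the moment factor, the $d$-dependent constant $4\kappa_d\,H(\cdot)$ is bounded by a universal constant. Since $\mu$ is supported on $B_2^d$, we have $\int_{\mathbb{R}^d}\|x\|_2^q\,d\mu(x)\le 1$, so $\big[\int\|x\|_2^q\,d\mu\big]^{2/q}\le 1$ and this factor drops out. The exponent $s:=2d/(d-2)$ is decreasing in $d$ with $s\le 10/3$ for $d\ge5$, so choosing the constant $q=4$ satisfies $q>2d/(d-2)$ for every $d\ge5$ (with $q-s=2(d-4)/(d-2)\ge 2/3$). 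With this choice all the scalars $s/q$, $(q-s)/s$, $q/(q-s)$ and $(q/s)^{q/(q-s)}$ lie in fixed bounded ranges independent of $d$ (for instance $s/q=d/(2(d-2))\in(1/2,5/6]$ and $q/(q-s)\in(2,6]$), which I would record at the outset.

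Next I would bound $\kappa_d$ from above. By Lemma \ref{covpack}, $N(B_2^d,\|\cdot\|,\epsilon)\le(3/\epsilon)^d$, hence $\epsilon^d N(\ldots)\le 3^d$ and $K_d\le 3^d$, giving $(K_d/4)^{2/d}\le 9$. Taking $r=2$ in the definition $\kappa_d=\min_{r\ge2}\kappa_{d,r}$, the remaining factors are all controlled for $d\ge5$: $2^4=16$, $(2-1)^2=1$, $(1-2^{-d/2})^{1-4/d}\le1$ (base in $(0,1)$, positive exponent), and $1-2^{2-d/2}\ge 1-2^{-1/2}$. This yields $\kappa_d\le\kappa_{d,2}\le C_0$ for a universal constant $C_0$.

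The main obstacle — and the one genuinely delicate point — is the first argument $x=2^{-4/d}/\kappa_d$ of $H$, which blows up if $\kappa_d$ is small, so a crude bound on $H$ will not suffice and a lower bound on $\kappa_d$ is not obviously available. I would resolve this by exploiting that $H$ is a power of an affine function of $x$. Writing $\alpha=(q-s)/s$, $\beta=(q/s)^{q/(q-s)}$, the quantity inside the outer power is $A:=x\,\alpha+(1+x)\beta=\beta+(c/\kappa_d)(\alpha+\beta)$ with $c=2^{-4/d}\le1$, and $H=A^{s/q}\cdot q/(q-s)$. The product that actually matters is $\kappa_d A^{s/q}$: using subadditivity $(u+v)^{s/q}\le u^{s/q}+v^{s/q}$ (valid since $s/q\le5/6<1$),
\[
\kappa_d A^{s/q}\ \le\ \kappa_d\,\beta^{s/q}+\kappa_d^{\,1-s/q}\big(c(\alpha+\beta)\big)^{s/q}.
\]
Because $1-s/q>0$ and $\kappa_d\le C_0$, both terms are bounded by a universal constant regardless of how small $\kappa_d$ is; crucially, only the upper bound on $\kappa_d$ is needed. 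Multiplying by the bounded factor $q/(q-s)\le6$ and by $4$ then gives $4\kappa_d\,H\big(2^{-4/d}/\kappa_d,\,2d/(d-2),\,4\big)\le C$, and combining with Lemma \ref{Wassersteinbounds} together with the moment bound $\le1$ yields $\mathbb{E}W_2^2(\mu,\mu_n)\le C\,n^{-2/d}$ with $C$ independent of $d$.
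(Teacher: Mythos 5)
Your proposal is correct, but it handles the $d$-dependence of the constant differently from the paper. The paper also starts from Lemma \ref{Wassersteinbounds} and bounds $\kappa_d\le\kappa_{d,2}$ by a universal constant exactly as you do, but it disposes of the factor $H\bigl(2^{-4/d}/\kappa_d,\,2d/(d-2),\,q\bigr)$ by letting $q\to\infty$ for fixed $d$: since the left-hand side does not depend on $q$, and $H(x,s,q)\to 1$ for any fixed $x,s>0$ while $\bigl[\int\|x\|_2^q\,d\mu\bigr]^{2/q}\to(\operatorname*{ess\,sup}\|x\|_2)^2\le1$, the entire $H$ factor and the moment factor vanish in the limit and only the bound on $\kappa_d$ remains. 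You instead fix $q=4$, verify $q>2d/(d-2)$ for $d\ge5$, and control the product $\kappa_d\,H$ directly via subadditivity of $u\mapsto u^{s/q}$, which neatly sidesteps the possible blow-up of the argument $2^{-4/d}/\kappa_d$ using only the upper bound on $\kappa_d$. (In fact a uniform positive lower bound on $\kappa_d$ does exist --- $K_d\ge1$ by Lemma \ref{covpack} gives $(K_d/4)^{2/d}\ge 4^{-2/5}$ and the remaining factors of $\kappa_{d,r}$ are bounded below for $r\ge2$, $d\ge5$ --- so one could also bound $H$ crudely; but your argument avoids needing this.) The paper's limit argument is shorter and requires no analysis of $H$ at all; your version is fully explicit and quantitative at a fixed $q$, at the cost of tracking the ranges of $s/q$, $\alpha$, $\beta$. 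Both yield the stated dimension-free constant.
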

\begin{proof}
    Apply Lemma \ref{Wassersteinbounds}. Observe that as $q \rightarrow \infty$, $H(x, s, q) \rightarrow 1$ for any $x,s >0$. Since $\mu$ is supported on the closed unit ball, \[\lim_{q \rightarrow \infty}\left[ \int_{B_2^d} \lVert x \rVert_2^q d\mu(x) \right]^{2/q} \leq 1.\] Now, notice that by Lemma \ref{covpack}, $K_d \leq 3^d$. Then, since $d \geq 5$,  
    \[\kappa_d \leq \kappa_{d,2} \leq \left(\frac{3^d}{4} \right)^{2/d} \cdot \frac{2^4\cdot (1-(1/2)^{d/2})^{1-(4/d))}}{(1-2^{2-(d/2)})} \leq 9\cdot \frac{1}{4^5} \cdot 2^4 \frac{1}{(1-2^{2-(5/2)})}.
    \]
    This completes the proof. 
\end{proof}

\subsection{Some results on subspace robust Wasserstein distance}

\begin{lemma}[\cite{Paty}, Proposition 2]\label{SRWAlternateForms} 
    Let $\mu, \nu$ be two probability measures defined on $\mathbb{R}^d$. Then, for any $k \leq d$, 
    \begin{equation*}
         \frac{1}{\sqrt{d}}\cdot W_2(\mu, \nu) \leq S_1(\mu, \nu) \leq W_2(\mu, \nu).
    \end{equation*}
\end{lemma}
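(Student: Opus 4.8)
The plan is to prove the two inequalities separately, in each case analyzing a single fixed coupling $\pi \in \Pi(\mu, \nu)$ and then passing to the infimum over couplings at the end. The right-hand inequality $S_1(\mu,\nu) \le W_2(\mu,\nu)$ is a direct consequence of Cauchy--Schwarz, whereas the left-hand inequality rests on interpreting both quantities spectrally in terms of the second-moment matrix of the displacement $x-y$.

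For the upper bound, I would fix $\pi \in \Pi(\mu,\nu)$ and any $w$ with $\|w\| \le 1$. Since $\langle w, x-y\rangle^2 \le \|w\|^2 \|x-y\|^2 \le \|x-y\|^2$ pointwise, integrating gives $\int \langle w, x-y\rangle^2\, d\pi \le \int \|x-y\|^2\, d\pi$. Taking the supremum over $\|w\| \le 1$ and then the infimum over $\pi$ yields, via the form of $S_1$ recorded in Remark \ref{S_1}, the bound $S_1^2(\mu,\nu) \le \inf_{\pi} \int \|x-y\|^2\, d\pi = W_2^2(\mu,\nu)$.

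For the lower bound, the key step is to introduce, for each fixed $\pi \in \Pi(\mu,\nu)$, the positive semidefinite matrix $M_\pi = \int (x-y)\otimes(x-y)\, d\pi \in \mathbb{R}^{d\times d}$. Two facts connect it to the quantities of interest: first, $\operatorname{tr}(M_\pi) = \int \|x-y\|^2\, d\pi$, obtained by expanding $\|x-y\|^2$ in an orthonormal basis; second, $\sup_{\|w\|\le 1} \int \langle w, x-y\rangle^2\, d\pi = \sup_{\|w\|\le 1}\langle w, M_\pi w\rangle = \lambda_{\max}(M_\pi)$, since $M_\pi$ is PSD and the supremum of a Rayleigh quotient is the top eigenvalue. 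Because $M_\pi$ has at most $d$ nonnegative eigenvalues, $\operatorname{tr}(M_\pi) \le d\,\lambda_{\max}(M_\pi)$, so $\lambda_{\max}(M_\pi) \ge \frac{1}{d}\int \|x-y\|^2\, d\pi \ge \frac{1}{d}W_2^2(\mu,\nu)$. This holds for every $\pi$, and taking the infimum over $\pi$ gives $S_1^2(\mu,\nu) \ge \frac{1}{d}W_2^2(\mu,\nu)$, which is exactly $\frac{1}{\sqrt d}\,W_2(\mu,\nu) \le S_1(\mu,\nu)$.

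I do not expect a serious obstacle: the argument is short once the spectral dictionary is in place. The only points requiring care are the interchange of integration with the trace and with the supremum over $w$ (legitimate because the bilinear form is evaluated on fixed vectors before integrating), and the recognition that the crude estimate $\operatorname{tr} \le d\cdot\lambda_{\max}$ is precisely what produces the dimension-dependent factor $\sqrt d$. This factor is sharp, being attained when $M_\pi$ is a multiple of the identity, i.e. when the displacement is isotropic.
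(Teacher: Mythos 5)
Your proof is correct. Note, however, that the paper does not prove this lemma at all: it is imported verbatim as \cite[Proposition 2]{Paty}, so there is no in-paper argument to compare against. Your two steps are exactly the standard ones (and essentially those in the cited reference): Cauchy--Schwarz pointwise for $S_1 \le W_2$, and the spectral observation that for the displacement second-moment matrix $M_\pi = \int (x-y)\otimes(x-y)\,d\pi$ one has $\sup_{\|w\|\le 1}\int\langle w,x-y\rangle^2\,d\pi = \lambda_{\max}(M_\pi) \ge \frac{1}{d}\operatorname{tr}(M_\pi) = \frac{1}{d}\int\|x-y\|^2\,d\pi \ge \frac{1}{d}W_2^2(\mu,\nu)$, valid for every coupling and hence surviving the infimum. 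The only cosmetic caveat is the degenerate case where every coupling has infinite second moment, which your trace argument handles anyway since an infinite trace forces an infinite diagonal entry. So your proposal supplies a correct, self-contained justification for a statement the paper takes on citation.
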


\begin{lemma}\label{opsrw}
    Suppose that $\mathcal{H}$ is a separable Hilbert space. Let $\mu$ be a probability measure on the unit ball $\{x \in \mathcal{H}: \|x\|\leq 1\}$. Let $P$ be the orthogonal projection onto a subspace of $\mathcal{H}$. Let $X$ be a random vector in $\mathcal{H}$ that is distributed according to $\mu$. Then, 
    \begin{equation*}
        S_1(\mu, P_\#\mu) \leq \|(I-P)\mathbb{E}(X \otimes X)(I-P)\|^{1/2}_{\mathrm{op}}.
    \end{equation*}
\end{lemma}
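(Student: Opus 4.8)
The plan is to upper-bound the infimum defining $S_1(\mu, P_\#\mu)$ by evaluating the objective at a single, explicitly chosen coupling, and then to recognize the resulting supremum as an operator norm.

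First, I would use the deterministic coupling induced by the map $x \mapsto (x, Px)$; that is, let $\pi$ be the pushforward of $\mu$ under $x \mapsto (x, Px)$. Its first marginal is $\mu$ and its second marginal is $P_\#\mu$, so $\pi \in \Pi(\mu, P_\#\mu)$, and under this coupling $x - y = (I-P)x$. Hence by Remark \ref{S_1},
\[ S_1^2(\mu, P_\#\mu) \leq \sup_{\|w\| \leq 1} \int_{\mathcal{H}} \langle w, (I-P)x \rangle^2 \, d\mu(x). \]

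Next, since $I - P$ is self-adjoint, I would transfer it across the inner product, writing $\langle w, (I-P)x\rangle = \langle (I-P)w, x\rangle$, and then apply the definition of $\mathbb{E}(X \otimes X)$ to obtain
\[ \int_{\mathcal{H}} \langle (I-P)w, x\rangle^2 \, d\mu(x) = \langle (I-P)w, \mathbb{E}(X \otimes X)(I-P)w\rangle = \langle w, (I-P)\,\mathbb{E}(X \otimes X)\,(I-P)w\rangle, \]
where the last equality again uses self-adjointness of $I-P$. The operator $A := (I-P)\,\mathbb{E}(X \otimes X)\,(I-P)$ is self-adjoint and positive semi-definite, so $\sup_{\|w\| \leq 1} \langle w, Aw\rangle = \|A\|_{\mathrm{op}}$; combining this with the previous display and taking square roots gives the claim.

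I expect no serious obstacle here, since the argument is essentially a direct computation. The only points needing care are checking that $x \mapsto (x, Px)$ really produces a coupling with the prescribed marginals, and correctly moving the projection $I-P$ across the inner product via its self-adjointness.
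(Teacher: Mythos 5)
Your proposal is correct and follows essentially the same route as the paper: both use the deterministic coupling given by the joint law of $(X, PX)$, rewrite $\langle w, (I-P)X\rangle^2$ via self-adjointness of $I-P$ and the definition of $\mathbb{E}(X\otimes X)$, and identify the resulting supremum over the unit ball with the operator norm of the positive semi-definite operator $(I-P)\,\mathbb{E}(X\otimes X)\,(I-P)$. No gaps.
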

\begin{proof}
    Let $\pi$ be the joint distribution of $(X,PX)$. Then, $\pi \in \Pi(\mu, P_\#\mu)$, so
    \begin{eqnarray*}
        S_1(\mu, P_\#\mu) &\leq& \sup_{\left\lVert w \right\rVert \leq 1} \left( \int \left|\langle w,x-y\rangle \right|^2 d\pi(x,y) \right)^{1/2}\\
        &=& \sup_{\|w\|\leq 1} \left( \mathbb{E}|\langle w, X-PX\rangle|^2 \right) ^{1/2}\\
    &=& \sup_{\left\lVert w \right\rVert \leq 1} \left\langle \left[(I-P)(\mathbb{E}(X \otimes X)) (I-P)\right] w, w \right\rangle^{1/2}\\
    &=& \left\|(I-P)\mathbb{E}(X \otimes X)(I-P)\right\|_{\mathrm{op}}^{1/2}.
    \end{eqnarray*}
\end{proof}

\section{Proof of upper bound in Theorem 1.1}
The goal of this section is to prove the upper bound in Theorem \ref{Mainthm}, i.e.,
\begin{equation}\label{upperbd}
  \sup_{\mu\in\mathcal{P}}\left( \mathbb{E}S_1^2(\mu, \mu_n) \right)^{1/2} \leq \frac{C\sqrt{\log \log n}}{\sqrt{\log n}},  
\end{equation}
where $C>0$ is a universal constant.

Fix a probability measure $\mu$ on the unit ball $\{x \in \mathcal{H} : \|x\| \leq 1\}$. Let $\mu_n = \frac{1}{n} \summ{1}{n}\delta_{X_i}$ where $X_1, \ldots, X_n$ are independent samples of $\mu$. Define a random vector $X$ in $\mathcal{H}$ that is distributed according to $\mu$. 
Observe that $\mathbb{E}(X \otimes X)$ is a 1-Schatten operator with trace \begin{equation}\label{trace}
    \mathrm{Tr}(\mathbb{E}(X \otimes X)) = \mathbb{E}\left\|X\right\|^2 \leq 1.
\end{equation}
Let $\lambda_1 \geq \lambda_2 \geq \ldots \geq 0$ be the eigenvalues of $\mathbb{E}(X \otimes X)$. Fix $t \in \mathbb{N}$ with $t \geq 5$. Let $P$ be the orthogonal projection onto the subspace spanned by $t$ eigenvectors that correspond to the $t$ largest eigenvalues $\lambda_1, \ldots, \lambda_t$. 

Since $S_1(\cdot, \cdot)$ is a metric, 
\[
S_1(\mu, \mu_n) \leq S_1(\mu, P_{\#}\mu) + S_1(P_{\#}\mu, P_{\#}\mu_n) + S_1(P_{\#}\mu_n,\mu_n).
\]
Also, since $(\mathbb{E}(\cdot)^2)^{1/2}$ is a norm, by the triangle inequality, we get: 
\begin{equation}\label{triangleineqsplit}
        \left(\mathbb{E}S_1^2(\mu,\mu_n)\right)^{1/2}
         \leq  \left(S_1^2(\mu, P_{\#}\mu)\right)^{1/2} +\left(\mathbb{E}S_1^2(P_{\#}\mu, P_{\#}\mu_n) \right)^{1/2} + \left(\mathbb{E}S_1^2(P_{\#}\mu_n,\mu_n)\right)^{1/2}.
\end{equation}
We now bound each of the three terms on the right hand side of (\ref{triangleineqsplit}). To bound the first term, we apply Lemma \ref{opsrw} and obtain
\begin{equation}\label{secondterm}
    S_1(P_{\#}\mu, \mu) \leq \left\|(I-P)\mathbb{E}(X\otimes X)(I-P)\right\|_{\mathrm{op}}^{1/2} =  \sqrt{\lambda_{t+1}} \leq \sqrt{\lambda_t}.
\end{equation}

To bound the second term on the right hand side of (\ref{triangleineqsplit}), observe that $t \geq 5$ and $P_{\#}\mu_n$ is an empirical measure of $P_{\#}\mu$. Additionally, observe that $P_{\#}\mu$ is a measure on a $t$-dimensional Hilbert space which is isometric to $(\mathbb{R}^t, \|\cdot \|_2)$. So, by Lemma \ref{SRWAlternateForms} and Lemma \ref{Cn^-1/t}, we have: 
\begin{equation}\label{firstterm}
    \left( \mathbb{E}S_1^2(P_{\#}\mu, P_{\#}\mu_n)\right)^{1/2} \leq (\mathbb{E}W_2^2(P_{\#}\mu, P_{\#}\mu_n))^{1/2} \leq C_3 n^{-1/t},
\end{equation}
where $C_3 > 0$ is a universal constant. 

We now bound the third term $\mathbb{E}S_1(P_{\#}\mu_n,\mu_n)$ on the right hand side of (\ref{triangleineqsplit}). Note that $\left\| \frac{1}{\sqrt{n}}(I-P)X_i \right\| \leq \left\| \frac{1}{\sqrt{n}} X_i \right\| \leq \frac{1}{\sqrt{n}}$ for all $i$. So, applying Lemma \ref{opsrw} and Lemma \ref{marchresult}, we obtain the following:
\begin{align}\label{thirdterm}
    &\left( \mathbb{E}S_1^2(P_{\#}\mu_n, \mu_n) \right)^{1/2} \\ \leq& \left(\mathbb{E}\left\|(I-P) \left( \frac{1}{n}\summ{1}{n} X_i \otimes X_i \right) (I-P)\right\|_{\mathrm{op}} \right)^{\frac{1}{2}} \nonumber \\
    \leq& \left(2 \left\|\mathbb{E}\left[ (I-P)(X \otimes X)(I-P) \right]\right\|_{\mathrm{op}} + \widetilde{C}n^{-1}\log (n)      \right)^{\frac{1}{2}} \nonumber\\
    \leq& \sqrt{2}\left\|(I-P)\mathbb{E}(X \otimes X)(I-P)\right\|_{\mathrm{op}}^{1/2} + \frac{C_2 \sqrt{\log n}}{\sqrt{n}} \nonumber\\ =& \sqrt{2\lambda_{t+1}} + \frac{C_2 \sqrt{\log n}}{\sqrt{n}} \leq \sqrt{2\lambda_t} + \frac{C_2 \sqrt{\log n}}{\sqrt{n}} \nonumber. 
\end{align}
where $\widetilde{C}, C_2 > 0$ are universal constants.

By (\ref{trace}), the sum of all the eigenvalues of $\mathbb{E}(X\otimes X)$ is
\[\summ{1}{\infty}\lambda_i = \mathrm{Tr}(\mathbb{E}(X \otimes X)) \leq 1. \] Since $\lambda_1 \geq \lambda_2 \geq \ldots \geq \lambda_t \geq 0$, we have $\lambda_t \leq \frac{1}{t}$.

Thus, combining (\ref{triangleineqsplit}), (\ref{secondterm}), (\ref{firstterm}), and (\ref{thirdterm}), we get \[(\mathbb{E}S_1^2(\mu, \mu_n))^{1/2} \leq \sqrt{\lambda_t}+C_3n^{-1/t}+ \left( \sqrt{2\lambda_t} +  \frac{C_2 \sqrt{\log n}}{\sqrt{n}} \right) \leq \frac{C_1}{\sqrt{t}}+\frac{C_2\sqrt{\log n}}{\sqrt{n}}+ C_3 n^{-1/t},\] for some universal constants $C_1, C_2, C_3 > 0$. Since our choice of $t \geq 5$ is arbitrary, we have:
\begin{equation*}
    (\mathbb{E}S_1^2(\mu, \mu_n))^{1/2} \leq \inf_{t \geq 5} \left\{ \frac{C_1}{\sqrt{t}}+\frac{C_2\sqrt{\log n}}{\sqrt{n}}+C_3 n^{-1/t} \right\}.
\end{equation*}

If $\left\lfloor \frac{\log n}{\log \log n} \right\rfloor \geq 5$, taking $t = \left\lfloor \frac{\log n}{\log \log n} \right\rfloor$, we obtain

\begin{equation*}
    (\mathbb{E}S_1^2(\mu, \mu_n))^{1/2} \leq \frac{\widetilde{C_1} \sqrt{\log \log n}}{\sqrt{\log n}} + \frac{C_2 \sqrt{\log n}}{\sqrt{n}} + \frac{C_3}{\log n} \leq \frac{C\sqrt{\log \log n}}{\sqrt{\log n}}.
\end{equation*}

If $\left\lfloor \frac{\log n}{\log \log n} \right\rfloor < 5$, then $\frac{\log n}{\log \log n} < 6$, so $\sqrt{\frac{\log \log n}{\log n}} > \frac{1}{\sqrt{6}}$. Since, $\mu, \mu_n$ are supported on the unit ball, $S_1(\mu, \mu_n) \leq 2$. Therefore, 

\begin{equation*}
    (\mathbb{E}S_1^2(\mu, \mu_n))^{1/2} \leq 2 \leq 2\sqrt{6}\cdot\frac{\sqrt{\log \log n}}{\sqrt{\log n}}.
\end{equation*}

Since $\mu$ is fixed arbitrarily, (\ref{upperbd}) holds. This completes the proof of the upper bound in Theorem \ref{Mainthm}.

\section{Proof of lower bound in Theorem 1.1}

The goal of this section is to prove the lower bound in Theorem \ref{Mainthm}, i.e.,
\begin{equation*}
  \sup_{\mu\in\mathcal{P}}(\mathbb{E}S_1^2(\mu, \mu_n))^{1/2} \geq \frac{c}{\sqrt{\log n}},  
\end{equation*}
where $c > 0$ is a universal constant. 

It suffices to show that for a fixed $n$, there exists $\mu \in \mathcal{P}$ such that $\mathbb{E}S_1(\mu, \mu_n) \geq \frac{c}{\sqrt{\log n}}$. 

    Fix $n \in \mathbb{N}$. Let $d = \lceil \log n \rceil$. Let $\mathcal{H}_d$ be a $d$-dimensional subspace of $\mathcal{H}$. Choose $\epsilon = \frac{1}{3}$. Using Lemma \ref{covpack}, 
    \begin{equation*}
        N_{\mathrm{pack}}(\{x \in \mathcal{H}_d : \left\|x\right\| \leq 1\}, \left\|\cdot\right\|, \epsilon) \geq 3^d \geq n.
    \end{equation*} So there exist $y_1, y_2, \ldots, y_n \in \{x \in \mathcal{H}_d : \left\|x\right\| \leq 1\}$ that are $\epsilon$-separated. 
    
    Define $\mu = \frac{1}{n}\summ{1}{n}\delta_{y_i}$. Note that $\epsilon < \|y_i - y_j\| \leq 2$ $\forall$ $i \neq j$. So, using Jensen's inequality and Lemma \ref{W1foreg}, we have: 
    \begin{equation*}
        \mathbb{E}W_2(\mu, \mu_n) \geq \mathbb{E}W_1(\mu, \mu_n) \geq \frac{\epsilon}{2} \summ{1}{n} \, \mathbb{E}\left| \mu_n(\{x_i\}) - \frac{1}{n} \right| \geq \frac{\epsilon}{2n} \summ{1}{n}\;\mathbb{E}|n\mu_n(\{x_i\}) - 1|.
    \end{equation*}
    Note that for each $i$, the random variable $n\mu_n(\{x_i\})$ is distributed according to binomial($n, \frac{1}{n}$) and $\mathbb{E}[n\mu_n(\{x_i\})] = 1$. Also note that $\sqrt{1-\frac{1}{n}} \geq \frac{1}{2}$ for all $n > 1$. Therefore, by Lemma \ref{mad}, 
    \begin{equation*}
        \mathbb{E}W_2(\mu, \mu_n) \geq \frac{\epsilon}{2n}\cdot n \cdot \frac{1}{\sqrt{2}} \cdot \sqrt{1-\frac{1}{n}} \geq \frac{1}{12\sqrt{2}},
    \end{equation*}
    because $\epsilon = \frac{1}{3}$.
    So by Lemma \ref{SRWAlternateForms},
    \begin{equation*}
        (\mathbb{E}S_1^2(\mu, \mu_n))^{1/2} \geq \mathbb{E}S_1(\mu, \mu_n) \geq \frac{1}{\sqrt{d}} \cdot  \mathbb{E}W_2(\mu, \mu_n) \geq \frac{1}{\sqrt{d}}\cdot \frac{1}{12\sqrt{2}} \geq \frac{c}{\sqrt{\log n}},
    \end{equation*}
    where $c > 0$ is a universal constant.
This completes the proof of the lower bound in Theorem \ref{Mainthm}.

\medskip

\noindent{\bf Acknowledgement:} The author is grateful to March Boedihardjo for his thorough guidance and immensely valuable insight.

\end{document}